\newtheorem{definition}{Definition}
\newtheorem{theorem}{Theorem}
\newtheorem{proposition}{Proposition}
\newtheorem{cor}{Corollary}
\theoremstyle{definition}
\newtheorem*{rem}{Remark}
\newtheorem{ex}{Example}
\DeclareMathOperator\cli{cl}
\newcommand\ga{\alpha}
\newcommand\gb{\beta}
\newcommand\gd{\delta}
\newcommand\gf{\varphi}
\newcommand\MSC{\textbf{2000 Mathematics Subject Classification: }}
\newcommand\keyw{\textbf{Key words: }}
\begin{document}
%\selectenglanguage

\title{IMPROVEMENT OF GRAPH THEORY WEI'S INEQUALITY%
\thanks{\MSC 05C35\protect}
\thanks{\keyw clique number, degree sequence}}
\author{Nedyalko Dimov Nenov}
\maketitle

\begin{abstract}
Wei in \cite8 and \cite9 discovered a bound on the clique number of
a given graph in terms of its degree sequence. In this note we give
an improvement of this result.
\end{abstract}

We consider only finite non-oriented graphs without loops and multiple edges.
A set of $p$ vertices of a graph is called a $p$-clique if each two of them
are adjacent. The greatest positive integer $p$ for which $G$ has a $p$-clique
is called clique number of $G$ and is denoted by $\cli(G)$. A set of vertices
of a graph is independent if the vertices are pairwise nonadjacent.
The independence number $\ga(G)$ of a graph $G$ is the cardinality of
a largest independent set of $G$.

In this note we shall use the following notations:
\begin{itemize}
\item
$V(G)$ is the vertex set of graph $G$;
\item
$N(v)$, $v\in V(G)$ is the set of all vertices of $G$ adjacent to $v$;
\item
$N(V)$, $V\subseteq V(G)$ is the set $\bigcap_{v\in V} N(v)$;
\item
$d(v)$, $v\in V(G)$ is the degree of the vertex $v$, i.e. $d(v)=|N(v)|$.
\end{itemize}

Let $G$ be a graph, $|V(G)|=n$ and $V\subseteq V(G)$. We define
\begin{align*}
W(V)&=\sum_{v\in V}\frac1{n-d(v)};\\
W(G)&=W(V(G)).
\end{align*}

Wei in \cite8 and \cite9 discovered the inequality
\[
\ga(G)\ge\sum_{v\in V(G)}\frac1{1+d(v)}.
\]

Applying this inequality to the complementary graph of $G$ we see
that it is equivalent to the following inequality
\[
\cli(G)\ge\sum_{v\in V(G)}\frac1{n-d(v)}
\]
that is
\begin{equation}\label{NN:1}
\cli(G)\ge W(G).
\end{equation}
Alon and Spencer \cite1 gave an elegant probabilistic proof of Wei's inequality.
In the present note we shall improve the inequality~\eqref{NN:1}.

\begin{definition}\label{d:1}
Let $G$ be a graph, $|V(G)|=n$ and $V\subseteq V(G)$.
The set $V$ is called a $\gd$-set in $G$, if
\[
d(v)\le n-|V|
\]
for all $v\in V$.
\end{definition}

\begin{ex}\label{ex:1}
Any independent set $V$ of vertices of a graph $G$ is a $\gd$-set in $G$ since
$N(v)\subseteq V(G)\setminus V$ for all $v\in V$.
\end{ex}

\begin{ex}\label{ex:2}
Let $V\subseteq V(G)$ and $|V|\ge\max\{d(v),v\in V(G)\}$. Since $d(v)\le|V|$
for all $v\in V(G)$, $V(G)\setminus V$, is a $\gd$-set in $G$.
\end{ex}

The next statement obviously follows from Definition~\ref{d:1}:

\begin{proposition}\label{pro:1}
Let $V$ be a $\gd$-set in a graph $G$. Then $W(V)\le 1$.
\end{proposition}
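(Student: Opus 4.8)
The plan is to bound the sum $W(V)$ term by term using the defining inequality of a $\gd$-set, so the whole argument reduces to a single elementary estimate. First I would invoke Definition~\ref{d:1}: for every $v\in V$ we have $d(v)\le n-|V|$, which rearranges to $n-d(v)\ge|V|$. Assuming $V\ne\emptyset$ so that $|V|\ge1$, this already gives $n-d(v)\ge1>0$, so each summand $\frac1{n-d(v)}$ is well defined and positive; this is the one point worth checking, since otherwise the reciprocals in $W(V)$ would be meaningless.

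Next I would take reciprocals in the inequality $n-d(v)\ge|V|$ to obtain, for each $v\in V$,
\[
\frac1{n-d(v)}\le\frac1{|V|}.
\]
Summing over all $v\in V$ and using that the sum has exactly $|V|$ terms yields
\[
W(V)=\sum_{v\in V}\frac1{n-d(v)}\le\sum_{v\in V}\frac1{|V|}=|V|\cdot\frac1{|V|}=1,
\]
which is precisely the asserted bound. The degenerate case $V=\emptyset$ I would dispose of separately and trivially, since then $W(V)=0\le1$ as an empty sum.

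I do not expect any genuine obstacle here: the estimate is a one-line consequence of the definition, which is exactly why the text introduces it as a statement that ``obviously follows.'' The only subtlety, as noted, is confirming strict positivity of the denominators $n-d(v)$ so that $W(V)$ is a legitimate sum of reciprocals, and this is immediate from $|V|\ge1$ for a nonempty $\gd$-set.
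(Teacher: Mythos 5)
Your argument is correct and is precisely the one-line estimate the paper has in mind when it declares the proposition to follow ``obviously'' from Definition~\ref{d:1}: from $d(v)\le n-|V|$ one gets $1/(n-d(v))\le 1/|V|$ and sums over the $|V|$ terms. Your extra care about the empty case and the positivity of the denominators is a reasonable tidying-up, not a departure from the paper's approach.
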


\begin{definition}\label{d:2}
A graph $G$ is called an $r$-partite graph if
\[
V(G)=V_1\cup\dots\cup V_r,
\quad
V_i\cap V_j=\emptyset,
\quad
i\ne j,
\]
where the sets $V_i$, $i=1,\dots,r$, are independent.
If the sets $V_i$, $i=1,\dots,r$, are $\gd$-sets in $G$, then $G$ is called
generalized $r$-partite graph. The smallest integer $r$ such that $G$ is
a generalized $r$-partite graph is denoted by $\gf(G)$.
\end{definition}

\begin{proposition}\label{pro:2}
$\gf(G)\ge W(G)$.
\end{proposition}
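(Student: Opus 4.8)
The plan is to deduce this directly from the additivity of the weight $W$ combined with Proposition~\ref{pro:1}. First I would set $r=\gf(G)$ and unpack the definition of $\gf$: by Definition~\ref{d:2} and the minimality built into $\gf(G)$, there exists a partition
\[
V(G)=V_1\cup\dots\cup V_r,\qquad V_i\cap V_j=\emptyset\ (i\ne j),
\]
in which every $V_i$ is a $\gd$-set in $G$. This decomposition is exactly the object I want to feed into the weight estimate.

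The second step is to observe that $W$ splits along this partition. Since $W(V)=\sum_{v\in V}\frac1{n-d(v)}$ is simply a sum of the vertex weights $\frac1{n-d(v)}$ over the vertices of $V$, it is additive over disjoint unions. As the sets $V_1,\dots,V_r$ are pairwise disjoint and cover $V(G)$, grouping the sum defining $W(G)$ according to which block each vertex lies in gives
\[
W(G)=W(V(G))=\sum_{i=1}^{r}W(V_i).
\]
The third and final step applies Proposition~\ref{pro:1} to each block: because each $V_i$ is a $\gd$-set, we have $W(V_i)\le 1$. Summing these $r$ inequalities yields $W(G)\le r=\gf(G)$, which is precisely the claimed bound.

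I do not anticipate a genuine obstacle here; the argument is a short two-line deduction once the pieces are assembled. The only point that warrants a moment's care is the additivity step, which relies on the blocks being disjoint so that no vertex weight is counted twice or omitted — but this disjointness is guaranteed outright by Definition~\ref{d:2}, so the grouping of the sum is legitimate. Consequently the entire content of the proposition is carried by Proposition~\ref{pro:1} applied blockwise.
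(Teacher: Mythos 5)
Your proposal is correct and coincides with the paper's own proof: both take the minimal partition of $V(G)$ into $\gf(G)$ pairwise disjoint $\gd$-sets, use additivity of $W$ over the blocks, and bound each $W(V_i)$ by $1$ via Proposition~\ref{pro:1}. No differences worth noting.
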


\begin{proof}
Let $\gf(G)=r$ and
\[
V(G)=V_1\cup\dots\cup V_r,
\quad
V_i\cap V_j=\emptyset,
\quad
i\ne j,
\]
where $V_i$, $i=1,\dots,r$, are $\gd$-sets in $G$.
Since $V_i\cap V_j=\emptyset$, $i\ne j$, we have
\[
W(G)=\sum_{i=1}^r W(V_i).
\]
According to Proposition~1 $W(V_i)\le 1$, $i=1,\dots,r$.
Thus $W(G)\le r=\gf(G)$.
\end{proof}

Below (see Theorem~\ref{th:1}) we shall prove that $\cli(G)\ge\gf(G)$.
Thus~\eqref{NN:1} follows from Proposition~2.

\begin{definition}[\cite2]\label{d:3}
Let $G$ be a graph and $v_1,\dots,v_r\in V(G)$.
The sequence $v_1,\dots,v_r$ is called an $\ga$-sequence in $G$ if
the following conditions are satisfied:
\begin{enumerate}[\rm(i)]
\item
$d(v_1)=\max\{d(v)\mid v\in V(G)\}$;
\item
$v_i\in N(v_1,\dots,v_{i-1})$ and $v_i$ has maximal degree in the graph\linebreak
$G[N(v_1,\dots,v_{i-1})]$, $2\le i\le r$.
\end{enumerate}
\end{definition}

Every $\ga$-sequence $v_1,\dots,v_s$ in the graph $G$ can be extended to
an $\ga$-sequence $v_1,\dots$, $v_s,\dots,v_r$ such that $N(v_1,\dots,v_{r-1})$
be a $\gd$-set in $G$. Indeed, if the $\ga$-sequence $v_1,\dots,v_s$, $\dots,v_r$
is such that it is not continued in a $(r+1)$-clique
(i.e. $v_1,\dots,v_s,\dots,v_r$ is a maximal $\ga$-sequence in the sense of
inclusion) then $N(v_1,\dots,v_{r-1})$ is an independent set and, therefore,
a $\gd$-set in $G$. However, there are $\ga$-sequences $v_1,\dots,v_r$ such
that $N(v_1,\dots,v_{r-1})$ is a $\gd$-set but it is not an independent set.

\begin{theorem}\label{th:1}
Let $G$ be a graph and $v_1,\dots,v_r$, $r\ge 2$,
be an $\ga$-sequence in $G$ such that $N(v_1,\dots,v_{r-1})$ is a $\gd$-set
in $G$. Then

$(a)$ $\gf(G)\le r\le\cli(G)$;

$(b)$ $r\ge W(G)$.
\end{theorem}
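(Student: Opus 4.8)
The plan is to deduce part $(b)$ from part $(a)$ for free: once we know $\gf(G)\le r$, Proposition~\ref{pro:2} gives $W(G)\le\gf(G)\le r$, which is exactly $(b)$. So essentially all the work lies in establishing $(a)$, namely the double inequality $\gf(G)\le r\le\cli(G)$.

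The inequality $r\le\cli(G)$ is the easy half. By Definition~\ref{d:3} each $v_i$ lies in $N(v_1,\dots,v_{i-1})$, so $v_i$ is adjacent to every earlier vertex of the sequence; hence $v_1,\dots,v_r$ are pairwise adjacent, i.e.\ they form an $r$-clique, and therefore $\cli(G)\ge r$.

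The substance is $\gf(G)\le r$, and I would prove it by exhibiting an explicit decomposition of $V(G)$ into $r$ $\gd$-sets. Set $A_0=V(G)$ and $A_i=N(v_1,\dots,v_i)$ for $1\le i\le r-1$, so that $A_0\supseteq A_1\supseteq\dots\supseteq A_{r-1}$. Define the ``shells'' $B_i=A_{i-1}\setminus A_i$ for $1\le i\le r-1$ and $B_r=A_{r-1}$. These sets are pairwise disjoint and their union telescopes to all of $V(G)$, so it suffices to verify that each $B_i$ is a $\gd$-set. The top shell $B_r=A_{r-1}$ is a $\gd$-set by the hypothesis of the theorem. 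For $1\le i\le r-1$ I would invoke the extremal property of $v_i$: writing $d_{A_{i-1}}(\cdot)$ for the degree inside the induced graph $G[A_{i-1}]$, the neighbours of $v_i$ in $A_{i-1}$ are exactly $A_i$, whence $d_{A_{i-1}}(v_i)=|A_i|$; and since $v_i$ has maximal degree in $G[A_{i-1}]$, every $u\in A_{i-1}$ satisfies $d_{A_{i-1}}(u)\le|A_i|$. Consequently every $u\in B_i$ has at least $|A_{i-1}|-|A_i|=|B_i|$ non-neighbours inside $A_{i-1}$ (the count includes $u$ itself), hence at least that many in all of $V(G)$, so $n-d(u)\ge|B_i|$, i.e.\ $d(u)\le n-|B_i|$. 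By Definition~\ref{d:1} this is precisely the statement that $B_i$ is a $\gd$-set, and we conclude $\gf(G)\le r$.

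I expect the only genuine obstacle to be spotting the right partition and recognizing that the maximality built into the $\ga$-sequence is exactly what turns ``$v_i$ has the largest degree in $G[A_{i-1}]$'' into a uniform lower bound on the non-neighbour count of \emph{every} vertex of the shell $B_i$; once the shells are set up, the verification is a one-line counting argument. A small point worth keeping in mind is that the shells $B_i$ need not be independent sets, but the notion of a generalized $r$-partite graph in Definition~\ref{d:2} only demands $\gd$-sets, so this is not an issue.
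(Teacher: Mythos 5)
Your proof is correct, and its overall skeleton matches the paper's: $r\le\cli(G)$ because an $\ga$-sequence is a clique, $(b)$ follows from $(a)$ via Proposition~\ref{pro:2}, and the heart of the matter is showing that $G$ is a generalized $r$-partite graph. The difference is in how that heart is handled. The paper simply cites an external result (reference [6], ``Balanced vertex sets in graph'') for the implication ``$N(v_1,\dots,v_{r-1})$ is a $\gd$-set $\Rightarrow$ $G$ admits a partition into $r$ $\gd$-sets,'' whereas you supply an explicit, self-contained construction: the shells $B_i=N(v_1,\dots,v_{i-1})\setminus N(v_1,\dots,v_i)$ together with the top cell $B_r=N(v_1,\dots,v_{r-1})$. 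Your verification that each shell is a $\gd$-set is sound: since $d_{A_{i-1}}(v_i)=|A_i|$ and $v_i$ has maximal degree in $G[A_{i-1}]$, every $u\in B_i$ has at least $|A_{i-1}|-|A_i|=|B_i|$ non-neighbours (counting $u$ itself) already inside $A_{i-1}$, hence $d(u)\le n-|B_i|$, which is exactly Definition~\ref{d:1}; and your observation that the shells need not be independent is precisely why the ``generalized'' notion is needed. What your route buys is a proof readable without reference [6] (and it in effect reconstructs the argument that reference presumably contains); what the paper's route buys is brevity and the reuse of a lemma it needs again in Theorem~\ref{th:3}, where the same citation also delivers the extra information that $v_i\in V_i$ for each $i$ --- a refinement your partition in fact also provides, since $v_i\in B_i$.
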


\begin{proof}
According to Definition~\ref{d:3} $v_1,\dots,v_r$ is an $r$-clique and thus
$r\le\cli(G)$. Since $N(v_1,\dots,v_{r-1})$ is a $\gd$-set, the graph $G$
is a generalized $r$-partite graph, \cite6. Hence $r\ge\gf(G)$.
The inequality~(b) follows from (a) and Proposition~2.
\end{proof}

\begin{rem}
Theorem~\ref{th:1} (b) was proved in \cite7 in the special case when\linebreak
$N(v_1,\dots,v_{r-1})$ is independent set in $G$.
\end{rem}

\begin{definition}\label{d:4}
Let $G$ be a graph and $v_1,\dots,v_r\in V(G)$.
The sequence $v_1,\dots,v_r$ is called $\gb$-sequence in $G$ if the following
conditions are satisfied:
\begin{enumerate}[\rm (i)]
\item
$d(v_1)=\max\{d(v)\mid v\in V(G)\}$;
\item
$v_i\in N(v_1,\dots,v_{i-1})$ and
$d(v_i)=\max\{d(v)\mid v\in N(v_1,\dots,v_{r-1})\}$, $2\le i\le r$.
\end{enumerate}
\end{definition}

\begin{theorem}\label{th:2}
Let $v_1,\dots,v_r$ be a $\gb$-sequence in a graph $G$
such that
\[
d(v_1)+\dots+d(v_r)\le (r-1)n,
\]
where $n=|V(G)|$. Then $r\ge W(G)$.
\end{theorem}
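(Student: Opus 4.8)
The plan is to adapt the partition strategy behind Proposition~\ref{pro:2} and Theorem~\ref{th:1}, but to replace the hypothesis that $N(v_1,\dots,v_{r-1})$ be a $\gd$-set by a quantitative estimate driven by the degree condition. Write $N_0=V(G)$ and $N_i=N(v_1,\dots,v_i)$ for $1\le i\le r-1$, and put $n_i=|N_i|$ and $a_i=n-d(v_i)$. Since each $v_i$ has maximal degree among the vertices of $N_{i-1}\supseteq N_i$, the degrees $d(v_i)$ are non-increasing, so $a_1\le a_2\le\dots\le a_r$. I would partition $V(G)$ into the $r$ pairwise disjoint sets $M_j=N_{j-1}\setminus N_j$ for $1\le j\le r-1$ together with $M_r=N_{r-1}$, so that $W(G)=\sum_{j=1}^r W(M_j)$.

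First I would record two elementary facts about this partition. Every vertex $v\in M_j$ lies in $N_{j-1}$, hence $d(v)\le d(v_j)$ and $\tfrac1{n-d(v)}\le\tfrac1{a_j}$; summing gives $W(M_j)\le |M_j|/a_j$. Next, since $N_{j-1}\setminus N_j=N_{j-1}\setminus N(v_j)$ is contained in $V(G)\setminus N(v_j)$, its cardinality is at most $n-d(v_j)=a_j$; that is, $|M_j|=n_{j-1}-n_j\le a_j$ for $1\le j\le r-1$. Telescoping this bound from $n_0=n$ yields the lower estimates $n_k\ge n-(a_1+\dots+a_k)$, which will carry the whole argument.

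Summing the per-part estimates gives $W(G)\le S$, where $S=\sum_{j=1}^{r-1}\tfrac{n_{j-1}-n_j}{a_j}+\tfrac{n_{r-1}}{a_r}$, and a summation by parts rewrites this as $S=n/a_1+\sum_{k=1}^{r-1}n_k\bigl(\tfrac1{a_{k+1}}-\tfrac1{a_k}\bigr)$. Here is the delicate point and the main obstacle: the coefficients $\tfrac1{a_{k+1}}-\tfrac1{a_k}$ are non-positive (this is exactly where monotonicity of the $a_i$ enters), so to bound $S$ from above I must substitute the \emph{lower} bounds $n_k\ge n-(a_1+\dots+a_k)$ rather than any upper bound on $n_k$. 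Doing so and telescoping a second time collapses almost everything and leaves $S\le (r-1)+\tfrac{\,n-(a_1+\dots+a_{r-1})\,}{a_r}$. Finally the hypothesis $d(v_1)+\dots+d(v_r)\le (r-1)n$ reads $a_1+\dots+a_r\ge n$, i.e. $a_r\ge n-(a_1+\dots+a_{r-1})$, so the last fraction is at most $1$ (and is negative when $n-(a_1+\dots+a_{r-1})<0$), giving $W(G)\le S\le r$. The one thing to watch is keeping the direction of every inequality correct through the two summations by parts, since it is precisely the non-positivity of the coefficients that makes the lower bounds on the $n_k$, and not upper bounds, the right tool.
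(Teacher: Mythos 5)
Your argument is correct, and it takes a genuinely different route from the paper. The paper's own proof of Theorem~\ref{th:2} is a two-line reduction: it quotes the theorem of \cite{5} that the condition $d(v_1)+\dots+d(v_r)\le(r-1)n$ forces $G$ to be a generalized $r$-partite graph, so that $r\ge\gf(G)$, and then invokes Proposition~\ref{pro:2}. You instead bound $W(G)$ directly from the nested chain $V(G)=N_0\supseteq N_1\supseteq\dots\supseteq N_{r-1}$, using the partition $M_j=N_{j-1}\setminus N_j$, $M_r=N_{r-1}$, the two estimates $W(M_j)\le|M_j|/a_j$ and $|M_j|\le a_j$ supplied by the $\gb$-sequence condition, and two summations by parts; I have checked the sign discussion and the telescoping, and they are sound (note $a_j=n-d(v_j)\ge1$ throughout, and a negative value of $n-(a_1+\dots+a_{r-1})$ only strengthens the final bound, as you observe). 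The comparison is instructive. The paper's route is shorter and yields the stronger structural conclusion $\gf(G)\le r$, but only by importing an unproved result from \cite{5}; your route is elementary and self-contained, at the cost of proving only the numerical inequality $W(G)\le r$. In fact your sets $M_1,\dots,M_{r-1}$ are automatically $\gd$-sets (since $|M_j|\le a_j$ and $d(v)\le n-a_j$ for $v\in M_j$), but $M_r=N_{r-1}$ need not be one, which is precisely why the naive bound $W(M_j)\le1$ of Proposition~\ref{pro:1} cannot close the argument and your Abel summation, trading the surplus in the first $r-1$ parts against the last one via the hypothesis $a_1+\dots+a_r\ge n$, is the right replacement. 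Your proof would let the note stand independently of \cite{5} for this theorem.
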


\begin{proof}
According to \cite5 it follows from $d(v_1)+\dots+d(v_r)\le (r-1)n$ that $G$
is a generalized $r$-partite graph. Hence $r\ge\gf(G)$ and Theorem~\ref{th:2}
follows from Proposition~\ref{pro:2}.
\end{proof}

\begin{cor}
Let $G$ be a graph, $|V(G)|=n$ and $v_1,\dots,v_r$ be
a $\gb$-sequence in $G$ which is not contained in $(r+1)$-clique.
Then $r\ge W(G)$.
\end{cor}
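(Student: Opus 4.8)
The plan is to reduce the Corollary to Theorem~\ref{th:2}: since $v_1,\dots,v_r$ is already assumed to be a $\gb$-sequence, it suffices to verify the degree-sum hypothesis
\[
d(v_1)+\dots+d(v_r)\le(r-1)n,
\]
and then Theorem~\ref{th:2} gives $r\ge W(G)$ at once.

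First I would record that $v_1,\dots,v_r$ is an $r$-clique: by condition~(ii) of Definition~\ref{d:4} each $v_i\in N(v_1,\dots,v_{i-1})$ is adjacent to all of its predecessors. The hypothesis that this clique is not contained in an $(r+1)$-clique then says that no vertex of $G$ is adjacent to every one of $v_1,\dots,v_r$. I would reformulate this as a covering statement: for each $w\in V(G)$ there is an index $i$ with $w\notin N(v_i)$. Indeed, if $w$ were adjacent to all the $v_i$ and $w\notin\{v_1,\dots,v_r\}$, then $\{v_1,\dots,v_r,w\}$ would be an $(r+1)$-clique; and if $w=v_j$, then $w\notin N(v_j)$ because $G$ has no loops. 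Hence
\[
\bigcup_{i=1}^{r}\bigl(V(G)\setminus N(v_i)\bigr)=V(G).
\]

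The remaining step is the union bound. Using $|V(G)\setminus N(v_i)|=n-d(v_i)$, the covering yields
\[
\sum_{i=1}^{r}\bigl(n-d(v_i)\bigr)\ge\Bigl|\bigcup_{i=1}^{r}\bigl(V(G)\setminus N(v_i)\bigr)\Bigr|=n,
\]
which rearranges to $d(v_1)+\dots+d(v_r)\le(r-1)n$, exactly the hypothesis needed to invoke Theorem~\ref{th:2}.

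I expect no real difficulty in this argument; the only point demanding attention is the covering identity, and in particular the bookkeeping for the clique vertices themselves, each of which lands in its own complementary neighborhood precisely because $G$ is loopless. Once the covering is in place, the conclusion is a one-line rearrangement followed by an appeal to the already-proved Theorem~\ref{th:2}.
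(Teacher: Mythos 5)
Your proof is correct and follows the same reduction as the paper: establish the degree-sum bound $d(v_1)+\dots+d(v_r)\le(r-1)n$ and then invoke Theorem~\ref{th:2}. The only difference is that the paper simply cites \cite3 for this inequality, whereas you prove it inline via the covering $\bigcup_{i=1}^{r}\bigl(V(G)\setminus N(v_i)\bigr)=V(G)$ and the union bound, and that argument is sound.
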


\begin{proof}
Since $v_1,\dots,v_r$ is not contained in $(r+1)$-clique it follows that
$d(v_1)+\dots+d(v_r)\le (r-1)n$, \cite3.
\end{proof}

\begin{theorem}\label{th:3}
Let $G$ be a graph, $|V(G)|=n$ and $v_1,\dots,v_r$, $r\ge 2$,
be a $\gb$-sequence in $G$ such that $N(v_1,\dots,v_{r-1})$ is a $\gd$-set
in $G$. Then $r\ge W(G)$.
\end{theorem}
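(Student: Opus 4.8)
The plan is to deduce Theorem~\ref{th:3} from Theorem~\ref{th:2}: I would show that the two hypotheses — that $v_1,\dots,v_r$ is a $\gb$-sequence and that $N(v_1,\dots,v_{r-1})$ is a $\gd$-set — already force the degree-sum inequality $d(v_1)+\dots+d(v_r)\le(r-1)n$ that Theorem~\ref{th:2} requires. Once this is in hand, Theorem~\ref{th:2} delivers $r\ge W(G)$ at once.

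For the bookkeeping I would set $N_i=N(v_1,\dots,v_i)$ and $n_i=|N_i|$ for $1\le i\le r-1$, together with $N_0=V(G)$ and $n_0=n$. Since $v_1,\dots,v_r$ is a clique we have the nested chain $N_0\supseteq N_1\supseteq\dots\supseteq N_{r-1}$ and the identity $N_i=N_{i-1}\cap N(v_i)$. The key local estimate is that for each $i$,
\[ d(v_i)=|N(v_i)\cap N_{i-1}|+|N(v_i)\setminus N_{i-1}|\le n_i+(n-n_{i-1}), \]
because $N(v_i)\cap N_{i-1}=N_i$ exactly, while the part of $N(v_i)$ outside $N_{i-1}$ cannot exceed $|V(G)\setminus N_{i-1}|=n-n_{i-1}$. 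This is pure counting and I would use it for $1\le i\le r-1$.

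For the final vertex the $\gd$-set hypothesis enters. Since $v_r\in N(v_1,\dots,v_{r-1})=N_{r-1}$ and $N_{r-1}$ is a $\gd$-set, Definition~\ref{d:1} gives $d(v_r)\le n-|N_{r-1}|=n-n_{r-1}$. Summing the local estimates telescopes,
\[ \sum_{i=1}^{r-1}d(v_i)\le(r-1)n+\sum_{i=1}^{r-1}(n_i-n_{i-1})=(r-1)n+n_{r-1}-n, \]
and adding $d(v_r)\le n-n_{r-1}$ makes the two boundary terms cancel, leaving $d(v_1)+\dots+d(v_r)\le(r-1)n$. Applying Theorem~\ref{th:2} to the $\gb$-sequence then yields $r\ge W(G)$. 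The step carrying the weight is recognizing that the $\gd$-set hypothesis on $N_{r-1}$ supplies exactly the slack $n-n_{r-1}$ needed to absorb the telescoping surplus $n_{r-1}-n$; everything else is routine. Notably, the $\gb$-sequence maximality is not used in the counting itself, only in order to be entitled to invoke Theorem~\ref{th:2}.

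An alternative, more self-contained route would bypass Theorem~\ref{th:2} and instead prove $\gf(G)\le r$ directly, in parallel with Theorem~\ref{th:1}(a). The sets $A_i=N_{i-1}\setminus N_i$ for $1\le i\le r-1$, together with $N_{r-1}$, partition $V(G)$ into $r$ blocks. Each $A_i$ is a $\gd$-set, since every $u\in A_i\subseteq N_{i-1}$ satisfies $d(u)\le d(v_i)\le n-|A_i|$ (here the $\gb$-maximality supplies $d(u)\le d(v_i)$, and the counting estimate above gives $d(v_i)\le n_i+n-n_{i-1}=n-|A_i|$); and $N_{r-1}$ is a $\gd$-set by hypothesis. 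Hence $G$ is generalized $r$-partite, and Proposition~\ref{pro:2} finishes the argument.
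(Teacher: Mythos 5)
Your proposal is correct, and both of your routes reach the conclusion, but they differ from the paper's proof in a substantive way. The paper invokes the cited result \cite6 as a black box: from the hypothesis that $N(v_1,\dots,v_{r-1})$ is a $\gd$-set it extracts a partition $V(G)=V_1\cup\dots\cup V_r$ into $\gd$-sets with $v_i\in V_i$, sums the defining inequalities $d(v_i)\le n-|V_i|$ to get $d(v_1)+\dots+d(v_r)\le(r-1)n$, and hands the rest to Theorem~\ref{th:2}. Your primary route reaches the same degree-sum inequality without ever producing a partition: the telescoping estimate $d(v_i)\le n_i+(n-n_{i-1})$ is pure counting on the nested common neighbourhoods, and the $\gd$-set hypothesis is used only once, to bound $d(v_r)$ by $n-n_{r-1}$ and cancel the surplus. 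This is more elementary and removes the dependence on \cite6, though it still funnels through Theorem~\ref{th:2} (and hence through the citation to \cite5 buried in its proof). Your alternative route goes further: the blocks $A_i=N_{i-1}\setminus N_i$ together with $N_{r-1}$ are exactly the partition that \cite6 promises (note $v_i\in A_i$ since $v_i\notin N(v_i)$), so you are in effect re-proving the cited lemma in this special case, and by then applying Proposition~\ref{pro:2} directly you bypass Theorem~\ref{th:2} and \cite5 as well. In short, the paper's proof is the shortest on the page but leans on two external results, while your argument trades a small amount of counting for self-containment; the one point worth flagging is that your first route genuinely does need the $\gb$-sequence property to be entitled to invoke Theorem~\ref{th:2}, exactly as you note, since the implication from the degree-sum bound to the generalized $r$-partite structure in that theorem is where the maximality of the $v_i$ is consumed.
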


\begin{proof}
Since $N(v_1,\dots,v_{r-1})$ is a $\gd$-set according to \cite6 there exists
an $r$-partition
\[
V(G)=V_1\cup\dots\cup V_r,
\qquad
V_i\cap V_j=\emptyset,
\quad
i\ne j,
\]
where $V_i$, $i=1,\dots,r$, are $\gd$-sets and $v_i\in V_i$.
Thus, we have
\[
d(v_i)\le n-|V_i|,
\quad
i=1,\dots,r.
\]

Summing up these inequalities we obtain that $d(v_1)+\dots+d(v_r)\le (r-1)n$.
Therefore Theorem~\ref{th:3} follows from Theorem~\ref{th:2}.
\end{proof}

\noindent
\begin{tabular}[t]{@{}l@{}}
Nedyalko Dimov Nenov\\
Faculty of Mathematics and Informatics\\
St Kliment Ofridski University of Sofia\\
5, James Bourchier Blvd.\\
BG-1164 Sofia, Bulgaria\\
e-mail: \texttt{nenov@fmi.uni-sofia.bg}
\end{tabular}
\end{document}